\documentclass[12pt]{article} 

\usepackage{latexsym,amssymb,amsthm,amsmath}
\usepackage{url,graphics}
\usepackage[dvips]{graphicx,epsfig}

\usepackage[T2A]{fontenc}
\usepackage[utf8]{inputenc}
\usepackage{amsfonts}
\usepackage{amsthm}
\usepackage{amsmath}
\usepackage{amssymb}
\usepackage[unicode=true]{hyperref}
\hypersetup{
colorlinks=true,
breaklinks=true,
linkcolor={black},
citecolor={black},
urlcolor={blue!60!black},
pdfauthor={Nikolai Karol}} 
\usepackage{pdfpages}
\usepackage{dsfont}
\usepackage{scalerel}
\usepackage{makecell,xcolor}
\usepackage[12pt]{extsizes} 
\usepackage{graphicx} 
\graphicspath{{pictures/}} 
\DeclareGraphicsExtensions{.pdf} 
\DeclareGraphicsExtensions{.png} 
\usepackage[utf8]{inputenc} 
\usepackage[russian, german, english]{babel}
\usepackage{amsmath}
\usepackage{indentfirst}
\usepackage{amsthm, amsmath, amssymb} 
\usepackage{tikz} 
\usepackage{amsfonts}
\usepackage[left=2cm,right=2cm,top=2cm,bottom=2cm,bindingoffset=0cm]{geometry} 
\usepackage{wrapfig} 
\usepackage{graphicx}
\graphicspath{{pictures/}} 
\DeclareGraphicsExtensions{.pdf} 
\DeclareGraphicsExtensions{.png} 
\usepackage{caption} 

\usepackage[longnamesfirst,numbers,sort&compress]
{natbib}
\sloppy

\graphicspath{[pictures/]} 
\DeclareGraphicsExtensions{.pdf,.png,.jpg}

\title{\textbf{Restriction on minimum degree in the contractible sets problem}}

\author{Nikolai Karol \thanks{The work is supported by Ministry of Science and Higher Education of the Russian Federation, agreement № 075–15–2022–287.}}

\date{}

\begin{document}

\maketitle

\righthyphenmin=2
\newtheorem{thm}{Theorem}
\newtheorem{lem}{Lemma}
\renewcommand*{\proofname}{\bf Proof}
\newtheorem{cor}{Corollary}
\newtheorem{conj}{Conjecture}
\newtheorem{claim}{Claim}
\newtheorem{subcl}{Subclaim}[claim]
\theoremstyle{definition}
\newtheorem{defin}{Definition}
\theoremstyle{remark}
\newtheorem{rem}{\bf Remark}

\def\mmax{\mathop{\rm max}}
\def\q#1.{{\bf #1.}}
\def\P{{\rm Part}}
\def\QP{{\rm QPart}}
\def\I{{\rm Int}}
\def\R{{\rm Bound}}
\def\RC{{\rm Cut}}
\def\B{{\rm BT}}
\def\T{{\rm T}}
\def\N{{\rm N}}
\def\GM{{\cal GM}}

\begin{center}
    \textbf{Abstract}
\end{center}

\small{Let $G$ be a $3$-connected graph. A set $W \subset V(G)$ is called \textit{contractible} if $G(W)$ is a connected graph and $G - W$ is a $2$-connected graph. In 1994, McCuaig and Ota conjectured that for any $k \in \mathbb{N}$ there exists $n \in \mathbb{N}$ such that any 3-connected graph $G$ with $v(G) \geqslant n$ has a $k$-vertex contractible set. It is proved that this holds if $k \geqslant 5$ and $\delta(G) \geqslant \left[ \frac{2k + 1}{3} \right] + 2$.}

%\thispagestyle{empty}

%\newpage

\section{Introduction}

\normalsize{We consider undirected graphs without loops and multiple edges and use standard notation. We denote by $v(G)$ the number of vertices of $G$ and by $\delta(G)$ the minimum degree of $G$.}

\begin{defin} 
Let $R \subseteq V(G)$.

1) $G - R$ is the graph obtained from $G$ by deleting all vertices of $R$ and all edges incident to the vertices of $R$.

2) $G(R)$ is the induced subgraph of $G$ on $R$.

3) $R$ is said to be \textit{connected} if $G(R)$ is connected.

4) $R$ is said to be a $k$\textit{-vertex set} if $|R| = k$.

5) $R$ is said to be \textit{contractible} if $G(R)$ is connected and $G - R$ is 2-connected.

6) $R$ is said to be $k$\textit{-contractible} if $R$ is a $k$-vertex contractible set.

7) Let $R_{1} \subseteq V(G)$ be a set such that $R \cap R_{1} = \varnothing$. We denote by $E_{G}(R, R_{1})$ the set of all $e \in E(G)$ that $e = xy$ with $x \in R$ and $y \in R_{1}$. Let $e_{G}(R, R_{1}) = |E_{G}(R, R_{1})|$. We say that $R_{1}$ is \textit{adjacent} to $R$ if $e_{G}(R, R_{1}) \geqslant 1$.

\end{defin}

Consider a $2$-connected graph $G$ with $n$ vertices, and let $n_{1}$ and $n_{2}$ be positive integers such that $n_{1} + n_{2} = n$. It is a well-known fact that $V(G)$ can be partitioned into two connected sets $V_{1}$ and $V_{2}$ such that $|V_{1}| = n_{1}$ and $|V_{2}| = n_{2}$.

In 1994, McCuaig and Ota~\cite{McOta1} formulated the following conjecture for 3-connected graphs. This conjecture was mentioned in Mader’s survey on connectivity~\cite{Mader2}.

\begin{conj} [\citep{McOta1}] \label{conjectureconjecture} \textit{For each $k \in \mathbb{N}$, there exists an integer $n$ such that every $3$-connected graph G on at least $n$ vertices has a $k$-contractible set.}
\end{conj}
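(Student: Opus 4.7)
The plan is to prove the conjecture by induction on $k$, taking as base the cases $k \leq 4$ established in McCuaig and Ota's original paper~\cite{McOta1}. Fix $k \geq 5$ and assume the conjecture for all smaller values.

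For the inductive step I would split the 3-connected graphs $G$ on at least $n = n(k)$ vertices into two regimes according to $\delta(G)$. In the first regime, $\delta(G) \geq \lfloor (2k+1)/3 \rfloor + 2$, the main theorem of the present paper (proved in the later sections) already produces a $k$-contractible set, so nothing further is needed. The substantive new work lies in the complementary low-degree regime $\delta(G) \leq \lfloor (2k+1)/3 \rfloor + 1$, which is exactly the regime the paper's main theorem does not cover and which contains the extremal examples of large cubic 3-connected graphs.

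In the low-degree regime I would pick a vertex $v$ realizing $\delta(G)$ and grow a connected set $W \ni v$ one vertex at a time, maintaining the invariant that $G - W$ is 2-connected. A candidate addition $u \in N(W) \setminus W$ is blocked only when $u$ lies on a very restricted sort of minimum cut of $G - W$, which can be detected through the block-cut tree of $G - W$. Since $|N(v)|$ is bounded and $G$ is large, most neighborhoods of $v$ in $G$ cannot simultaneously be trapped by nontrivial 3-cuts, so with care the growth can be routed to reach $|W| = k$ without stalling.

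The hard part will be 3-connected graphs swarming with small 3-cuts, such as cubic 3-connected graphs built by iterated prism or $Y$-$\Delta$ constructions, in which every attempted $W$ is forced across nested 3-cuts and the 2-connectivity invariant is repeatedly broken. To handle these, I would use a Tutte-style decomposition of $G$ into essentially 4-connected atoms, apply the high-degree theorem or the inductive hypothesis inside any atom of size $\geq k$, and, when every atom is small, assemble a $k$-contractible set along a chain of consecutive atoms in the 3-cut decomposition tree. Making this assembly rigorous — in particular, controlling the damage inflicted on $G - W$ as $W$ crosses successive 3-cuts — is the heart of the remaining work and is the principal reason the conjecture has resisted full solution for large $k$. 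A complete proof will almost certainly require a stronger tracked invariant than bare 2-connectivity of $G - W$, for instance the list of minimum 2-cuts of $G - W$ together with how they meet the boundary of $W$.
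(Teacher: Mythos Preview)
The statement you are attempting to prove is the McCuaig--Ota \emph{conjecture}; the paper does not prove it and explicitly presents it as open. The paper's contribution is only Theorem~\ref{mytheorem}, the high-degree case $\delta(G)\geqslant\lfloor(2k+1)/3\rfloor+2$, and the text makes clear that the full conjecture is established only for $k\leqslant 6$. So there is no ``paper's own proof'' of this statement to compare against.

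Your proposal is not a proof but an outline whose decisive step is left open, as you yourself acknowledge. The high-degree regime is exactly what Theorem~\ref{mytheorem} handles, so that branch is fine. The entire content of the conjecture lies in the low-degree branch, and your plan there is speculative: growing a connected $W$ while maintaining $2$-connectivity of $G-W$ is precisely the naive approach that fails for, e.g., large cubic $3$-connected graphs, where \emph{every} neighbour of the current $W$ can simultaneously sit on a bad $2$-cut of $G-W$. Your fallback --- a Tutte decomposition into atoms and an assembly of $W$ along a chain of atoms --- is where you concede ``making this assembly rigorous \ldots\ is the heart of the remaining work and is the principal reason the conjecture has resisted full solution.'' That sentence is an admission that the proof is incomplete, not a proof step. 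No stronger tracked invariant is actually produced, and there is no argument that one exists; absent that, the low-degree case is simply the open problem restated.
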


Mader~\cite{Mader3} showed that the answer to the analogous problem is negative for $t$-connected graphs with $t \geqslant 4$. More specifically, for any $k \geqslant 2$, there exists an arbitrarily large $t$-connected graph $G$ such that $G$ does not contain a connected set $W$ such that $|W| = k$ and $G - W$ is $(t - 1)$-connected. Thus the question remains open only for $3$-connected graphs.

Conjecture \ref{conjectureconjecture} is clear for $k = 1$, and is proved for $k = 2$ in~\cite{Tutte4}, for $k = 3$ in~\cite{McOta1}, for $k = 4$ in~\cite{Kriesell5} and for $k = 5$ in~\cite{Nadya6}. 

Karpov~\cite{Karpov7} established the existence of large contractible sets in $3$-connected graphs.

\begin{thm} [\citep{Karpov7}] Let $n \geqslant 5$ be an integer, and let $G$ be a 3-connected graph with $v(G) \geqslant 2n + 1$. Then $G$ has a contractible set $W$ such that $n \leqslant |W| \leqslant 2n - 4$.
\end{thm}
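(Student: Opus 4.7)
The plan is to proceed by an extremal argument on the size of contractible sets. Let $W$ be a contractible set of minimum cardinality subject to $|W| \geq m$; the goal is to show $|W| \leq 2m - 4$. The existence of \emph{some} contractible set with $|W| \geq m$ is established by an incremental construction: starting from any vertex $v_0$ (which is itself a $1$-contractible set, since $G - v_0$ is $2$-connected by $3$-connectivity of $G$), one grows the set one vertex at a time while preserving both connectedness of $W$ and $2$-connectedness of $G - W$. The hypothesis $v(G) \geq 2m+1$ guarantees enough room to reach size $m$.

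Setting $U = V(G) \setminus W$, the subgraph $G(U)$ is $2$-connected. Assume for contradiction $|W| \geq 2m-3$. I aim to find a nonempty $X \subseteq W$ with $|X| \leq |W| - m$ such that $W' = W \setminus X$ is contractible, which contradicts the minimality of $|W|$. Contractibility of $W'$ requires (i) $G(W')$ connected and (ii) $G(U \cup X)$ $2$-connected. A sufficient condition for (ii), given that $G(U)$ is already $2$-connected, is the existence of an ordering $x_1, \dots, x_{|X|}$ of $X$ under which each $x_i$ has at least two neighbours in $U \cup \{x_1, \dots, x_{i-1}\}$; adding a vertex of degree $\geq 2$ to a $2$-connected graph keeps it $2$-connected.

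To construct such an $X$ I would analyse the block--cutpoint tree of $G(W)$ and focus on a leaf block $B$ with cut-vertex $c$ (if $G(W)$ is itself $2$-connected, use an open ear decomposition instead and consider its terminal ear). Returning the vertices of $B \setminus \{c\}$ keeps $G(W \setminus X)$ connected because $B$ is a leaf block; and the $3$-connectivity of $G$ together with the size constraint $|W| \geq 2m-3$ forces each returned vertex to have sufficient neighbours in $U$, once the return order is chosen from those vertices of $B$ closest to $U$ and proceeds inwards. The principal obstacle, and where the bound $2m-4$ is actually forced, is the delicate accounting required when $B$ is very large (so that returning all of $B \setminus \{c\}$ would overshoot to $|W'| < m$) or when every leaf block is so small that returning one fails to shrink $W$ at all: in these cases one must either aggregate several leaf blocks or descend into the block tree and return a carefully chosen connected subset of $W$, while ensuring the $2$-neighbour ordering for (ii) remains available. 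This balancing between ``too few vertices to shrink'' and ``too many vertices to stay $\geq m$'' is the combinatorial heart of the argument, and the precise constants depend on the hypothesis $v(G) \geq 2m + 1$.
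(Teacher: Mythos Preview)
This theorem is not proved in the present paper; it is quoted from Karpov~\cite{Karpov7} and used only as background. There is therefore no in-paper proof to compare against, and I assess your proposal on its own terms.

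The argument breaks at its first step. You assert that a contractible set can be grown one vertex at a time from an arbitrary starting vertex until it reaches size $m$. This is false: in the wheel $W_n$ with centre $c$, the singleton $\{c\}$ is contractible (since $W_n-c$ is a cycle), yet for every rim vertex $r$ the graph $W_n-\{c,r\}$ is a path and not $2$-connected, so $\{c\}$ is a \emph{maximal} contractible set of size~$1$. More to the point, if one could always grow a contractible set by one vertex, the McCuaig--Ota conjecture would be trivial for every $k$ with $n=k+3$; the whole difficulty of the subject, and the reason this paper introduces maximal contractible sets and develops Lemmas~\ref{lemma2} and~\ref{lemma3}, is precisely that growth can be blocked. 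Your existence argument therefore assumes what the entire area is trying to prove.

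The shrinking half is also not a proof. The claim that ``$3$-connectivity of $G$ together with $|W|\geqslant 2m-3$ forces each returned vertex to have sufficient neighbours in $U$'' is unsupported: a vertex of $W$ may well have all of its neighbours inside $W$. You then concede that the ``delicate accounting'' balancing leaf-block sizes is ``the combinatorial heart of the argument'' but do not perform it; that accounting is exactly where the constant $2m-4$ has to be earned, and nothing in the outline indicates why that number, rather than $2m-3$ or $2m$, falls out. What you have is a plausible scaffold with both load-bearing members missing.
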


Kriesell~\cite{Kriesell5} proved the following.

\begin{thm} [\citep{Kriesell5}] \label{kriessell'sresult} Let $G$ be a 3-connected graph with at least 7 vertices that is not isomorphic to~$K_{3, 4}$. Then $G$ has a 4-contractible set.
\end{thm}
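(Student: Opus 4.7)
The plan is to argue by contradiction: assume $G$ is a $3$-connected graph with $v(G) \geq 7$, $G \not\cong K_{3,4}$, and $G$ has no $4$-contractible set. First I would observe that a connected $4$-vertex set $W$ fails to be contractible precisely when $G - W$ has a cut vertex (it cannot be disconnected because $G$ is $3$-connected), which in turn is equivalent to the existence of $x \notin W$ such that $W \cup \{x\}$ contains a $3$-cut of $G$ whose removal splits $V(G) \setminus (W \cup \{x\})$ into at least two nonempty pieces. The strategy is to generate many such obstructions, show they force $G$ to have very rigid structure, and finally identify $G$ with $K_{3,4}$.

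As the starting object I would fix a $3$-contractible set $T = \{a, b, c\}$; existence of such $T$ for sufficiently large $v(G)$ is the McCuaig--Ota theorem~\cite{McOta1}, and the finitely many small cases can be checked by direct inspection. For every $d \in N(T) \setminus T$ the $4$-set $W_d = T \cup \{d\}$ is connected, hence by assumption not contractible, so we obtain a vertex $x_d \notin W_d$ and a $3$-cut $S_d \subseteq W_d \cup \{x_d\}$ of $G$ with components $A_d, B_d$ of $G - S_d$ both nonempty.

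The key step is an extremal choice of $T$: among all $3$-contractible triples and all associated obstructions, take one that minimises the size of the smaller fragment $A_d$. With this choice, $3$-connectivity and the minimality of $T$ should force each such small fragment to collapse to a single vertex $y_d$ of degree exactly $3$ with $N(y_d) = S_d$. Ranging this over all $d \in N(T) \setminus T$ produces a family of degree-$3$ vertices whose neighbourhoods lie inside a small window around $T$, and a combinatorial pigeonhole argument on the intersections of these neighbourhoods should force a $K_{3, s}$-bipartite structure on $G$ with $T$ on one side. The constraints $v(G) \geq 7$ together with the nonexistence of a $4$-contractible set should then pin down $s = 4$, i.e. $G \cong K_{3,4}$, contradicting our assumption.

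The main obstacle is ensuring that the extremal choice of $T$ really does yield a rigid fragment for every direction $d$ simultaneously. If some $d$ fails to produce a minimal obstruction, one must either re-optimise $T$ or directly exploit that $d$ to build a $4$-contractible set by pivoting to a neighbouring $3$-contractible triple (for example, replacing some vertex of $T$ by $d$ and re-running the argument). Managing this interplay between local obstructions and the global extremal choice, and in particular handling the boundary cases where $T$ itself sits inside a $3$-cut of $G$, is where the bulk of the case analysis is expected to live.
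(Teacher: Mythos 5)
This theorem is not proved in the paper at all: it is quoted as the main result of Kriesell's paper \cite{Kriesell5}, where the argument occupies essentially the whole of a journal article. So there is no in-paper proof to compare against, and your text has to stand on its own as a proof of a known-to-be-hard result. It does not: it is a plan whose three decisive steps are all left as ``should'' statements --- that the extremal choice of $T$ forces every small fragment to collapse to a single degree-$3$ vertex, that the resulting family of neighbourhoods forces a $K_{3,s}$ structure, and that the hypotheses pin down $s=4$. None of these is argued, and each is exactly where the difficulty of Kriesell's theorem lives. In particular, the claim that one extremal choice of the triple $T$ simultaneously rigidifies the obstruction for \emph{every} $d \in N(T)\setminus T$ is precisely the kind of statement that tends to fail and to require the long case analysis you yourself flag at the end; acknowledging an obstacle is not the same as overcoming it.

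There is also a concrete error at the start. You assert that $G-W$ having a cut vertex is ``equivalent to the existence of $x \notin W$ such that $W \cup \{x\}$ contains a $3$-cut of $G$.'' What is true is that $W \cup \{x\}$ is then a separator of $G$ of size $5$; a minimal separator contained in it may have $3$, $4$ or $5$ vertices, and in a $3$-connected graph a $5$-set that separates need not contain any $3$-cut. So the obstruction objects $S_d$ on which the whole subsequent construction rests are not guaranteed to exist in the form you use them, and the fragments $A_d, B_d$ of ``$G - S_d$'' are not well defined. (The target statement itself is consistent --- one checks directly that $K_{3,s}$ has a $4$-contractible set for $s \geqslant 5$ and that $K_{3,4}$ does not --- but that check is the only part of your outline that is actually complete.) As it stands the proposal is a research programme, not a proof.
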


Our main result is the following.

\begin{thm} \label{theoremtheorem} For any integer $k \geqslant 5$, every 3-connected graph $G$ with $v(G) \geqslant k + 3$ and $\delta(G) \geqslant  \left[ \frac{2k + 1}{3} \right] + 2$ has a $k$-contractible set.
\end{thm}

\section{Tools}

We formulate several definitions and facts concerning the structure of $n$-connected graphs. 

\begin{defin}

A contractible set $W \subseteq V(G)$ of a $3$-connected graph $G$ is \textit{maximal} if there exists no vertex $x \in V(G) \setminus W$ such that $W \cup \{x\}$ is contractible.

\end{defin}

\begin{defin}

Let $G$ be an $n$-connected graph.

1) A set $R \subseteq V(G)$ is a \textit{cutset} if $G - R$ is not connected.

2) We denote by $\mathfrak{R}_{n}(G)$ the set of all $n$-vertex cutsets of $G$.

3) A cutset $R \subseteq V(G)$ \textit{splits} a set $X \subseteq V(G)$ if $X \setminus R$ is not contained in one connected component of $G - R$.

4) Two cutsets $S, T \in \mathfrak{R}_{n}(G)$ are \textit{independent} if $S$ does not split $T$ and $T$ does not split $S$. Otherwise, these cutsets are \textit{dependent}.

\end{defin}

\begin{defin} Let $\mathfrak{S} \subset \mathfrak{R}_{n}(G)$.

1) A set $A \subseteq V(G)$ is a \textit{part of decomposition} of $G$ by $\mathfrak{S}$ if no cutset of $\mathfrak{S}$ splits $A$ and $A$ is a maximal set up to inclusion with this property. By Part$(G; \mathfrak{S})$, we denote the set of all parts of decomposition of $G$ by $\mathfrak{S}$.

2) Let $A \in$ Part$(G; \mathfrak{S})$. A vertex of $A$ is \textit{inner} if it does not belong to any cutset of $\mathfrak{S}$. The set of all inner vertices of $A$ is called the \textit{interior} of $A$ and is denoted by Int($A$).

The \textit{boundary} of $A$ is the set Bound($A$) $= A \setminus \text{Int}(A)$.

\end{defin}

\begin{defin} Let $G$ be a $2$-connected graph.

1) A cutset $S \in \mathfrak{R}_{2}(G)$ is \textit{single} if $S$ is independent with all other cutsets of $\mathfrak{R}_{2}(G)$. We denote by $\mathfrak{O}(G)$  the set of all single cutsets of $G$.

2) We write Part($G$) instead of Part($G; \mathfrak{O}(G)$). The parts of this decomposition are simply called the \textit{parts} of $G$.

\end{defin}

\begin{defin} The \textit{block tree} BT($G$) of a $2$-connected graph $G$ is a bipartite graph with bipartition ($\mathfrak{O}(G)$, Part($G$)), where a single cutset $S$ and a part $A$ are adjacent if and only if $S \subset A$.

\end{defin}

We need the following property of BT($G$).

\begin{lem} {\rm \cite[Lemma 1]{Karpov8}} Let $G$ be a $2$-connected graph. Then BT($G$) is a tree and every leaf of BT($G$) corresponds to a part of Part($G$).
\end{lem}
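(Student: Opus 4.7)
My plan is to verify three things: no single cutset is a leaf of BT($G$), BT($G$) is connected, and BT($G$) is acyclic. The key common tool is that all cutsets in $\mathfrak{O}(G)$ are pairwise independent by definition.

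For the leaf claim, I show that every $S \in \mathfrak{O}(G)$ belongs to at least two parts. Since $S$ is a $2$-cutset, $G - S$ has components $D_1, \dots, D_m$ with $m \geqslant 2$. For each $i$, starting from $S$ itself (which is unsplit by any $T$, since independence forces $S \setminus T$ into one component of $G - T$) and greedily extending within $S \cup D_i$, I obtain a maximal unsplit set $A_i$ satisfying $S \subseteq A_i \subseteq S \cup D_i$. Any such $A_i$ is maximal in all of $V(G)$, because adding a vertex from $D_j$ with $j \neq i$ would cause $S$ itself to split the set. The parts $A_i$ for distinct $i$ differ, giving $\deg_{\mathrm{BT}(G)}(S) \geqslant m \geqslant 2$, so $S$ is not a leaf. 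Hence every leaf of BT($G$) must lie in the other class of the bipartition, i.e., be a part.

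For connectedness, I show any two parts $A, B \in \mathrm{Part}(G)$ lie in a common component of BT($G$). If $A$ and $B$ share a cutset $S \in \mathfrak{O}(G)$, the path $A - S - B$ does the job. Otherwise, pick inner vertices $a \in \mathrm{Int}(A)$, $b \in \mathrm{Int}(B)$; since inner vertices determine parts uniquely and $A \neq B$, the set $\{a, b\}$ must be split by some $S \in \mathfrak{O}(G)$. A maximality argument (together with independence) shows $S \subseteq A$, so $A$ is adjacent in BT($G$) to $S$, which is in turn adjacent to a part $A'$ whose separation-distance to $B$ is strictly smaller. Induction on this distance finishes the proof.

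For acyclicity, I argue by contradiction from a minimal cycle $A_1 - S_1 - A_2 - S_2 - \cdots - A_\ell - S_\ell - A_1$ in BT($G$). Fix $x_i \in \mathrm{Int}(A_i)$. Each $S_i$ must separate $x_i$ from $x_{i+1}$ in $G$, else $A_i \cup A_{i+1}$ would be unsplit, contradicting the maximality of $A_i$. Walking around the cycle and using pairwise independence of consecutive $S_i, S_{i+1}$ to consistently assign a ``side'' of each $S_i$ containing $x_1$, one reaches a configuration where $x_1$ must lie on two different sides of the same $S_j$, the desired contradiction. The main obstacle is making this sidedness argument precise in all degenerate cases, in particular when consecutive $S_i, S_{i+1}$ share a vertex or when $\ell$ is small.
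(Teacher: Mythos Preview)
The paper does not give its own proof of this lemma: it is quoted from \cite{Karpov8} and used as a black box, so there is no argument in the present paper against which to compare your attempt.

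Judging your sketch on its own merits, the three-part strategy is the standard one, but there is a concrete gap in the connectedness step. You assume that when two parts $A,B$ share no cutset you may choose $a\in\mathrm{Int}(A)$ and $b\in\mathrm{Int}(B)$, yet parts can have empty interior. For instance, take $V(G)=\{1,\dots,7\}$ with edge set the union of the triangles $123,234,345,456,567$; one checks that $\mathfrak{O}(G)=\{\{2,3\},\{3,4\},\{4,5\},\{5,6\}\}$, and the parts $\{2,3,4\}$ and $\{4,5,6\}$ both have empty interior while sharing no common cutset, so your case distinction does not cover them. There is also a smaller gap in the leaf argument: the claim that each $A_i$ is maximal in all of $V(G)$ relies on $A_i$ already containing some vertex of $D_i$, but you have not shown that the greedy extension cannot get stuck at $A_i=S$ (and if $A_i=S$, adding $w\in D_j$ gives $(S\cup\{w\})\setminus S=\{w\}$, which is not split by $S$). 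Finally, you yourself flag that the acyclicity argument is only a heuristic; in particular, identifying the cutset that separates $x_i$ from $x_{i+1}$ with the specific $S_i$ appearing on the cycle is exactly where the work lies, and nothing you wrote rules out that some other $T\in\mathfrak{O}(G)$ is the separating cutset.
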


\begin{defin}

Let $G$ be a $2$-connected graph. A part $A \in$ Part($G$) is \textit{pendant} if $A$ corresponds to a leaf of BT($G$).

\end{defin}

\begin{defin} Let $G$ be a $2$-connected graph.

1) We denote by $G'$ the graph obtained from $G$ by adding all edges $ab$ where $\{a, b\} \in \mathfrak{O}(G)$.

2) A part $A \in$ Part($G$) is called a \textit{cycle} if the graph $G'(A)$ is a cycle. If $A$ is a cycle then $|A|$ is the \textit{length} of $A$.

\end{defin}

\begin{lem} \label{lemma2} {\rm \cite[Lemma 13]{Karpov7}} Let $G$ be a $3$-connected graph. Let $W \subset V(G)$ be a maximal contractible set such that the graph $H = G - W$ is not a simple cycle. Then the following statements hold.

1) Let $A \in$ $Part(H)$ be a cycle. Then each inner vertex of $A$ is adjacent to $W$.

2) There are at least two pendant parts in $Part(H)$ and all these parts are cycles of length at least 4.

3) Let $A \in Part(H)$ be a pendant part. Then $H - Int(A)$ is $2$-connected.
\end{lem}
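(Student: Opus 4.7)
My plan is to handle the three claims in order, each time exploiting the maximality of $W$ together with the $3$-connectivity of $G$. For claim 1, I would start from the standard property of the decomposition by single cutsets that every $H$-neighbour of an inner vertex lies in its own part. The extra edges of $G'(A)$ compared with $H(A)$ only join vertices of single $2$-cutsets, which all lie in $\mathrm{Bound}(A)$, so they do not touch any inner vertex; hence if $G'(A)$ is a cycle then $\deg_H(v) = \deg_{G'(A)}(v) = 2$ for every $v \in \mathrm{Int}(A)$, and since $\deg_G(v) \geq 3$ the missing neighbour must lie in $W$.

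For claim 2 I would first argue that $\mathrm{BT}(H)$ has at least two vertices, hence at least two leaves, which by Lemma 1 are parts. If $\mathfrak{O}(H)$ were empty then $H$ would be $3$-connected (a basic fact of the block-tree decomposition is that any $2$-connected graph with a $2$-cutset admits a single $2$-cutset); then picking any $v \in V(H)$ adjacent to $W$ (such $v$ exists because $G$ is connected and $V(H)$ is nonempty) would give a contractible $W \cup \{v\}$, contradicting maximality. Next, to show each pendant part $A$ is a cycle of length at least $4$, let $S = \{s_{1}, s_{2}\}$ be its unique boundary cutset; by a standard fact $G'(A)$ is either a cycle or $3$-connected. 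The crucial auxiliary observation is that every inner vertex $v \in \mathrm{Int}(A)$ is adjacent to $W$: there are three internally disjoint paths from $v$ to $W$ in $G$, at most two of them can exit $A$ through $s_{1}$ or $s_{2}$, so at least one must start with a direct edge from $v$ into $W$. I then rule out the two bad sub-cases. If $G'(A)$ is a triangle then $\mathrm{Int}(A) = \{v\}$ and $H - v = H - \mathrm{Int}(A)$, which by claim 3 is $2$-connected, so $W \cup \{v\}$ would be contractible, a contradiction. If $G'(A)$ is $3$-connected, pick any $v \in \mathrm{Int}(A)$ and glue the $2$-connected graphs $H - \mathrm{Int}(A)$ and $G'(A) - v$ along $S$ to conclude that $H - v$ is $2$-connected, again making $W \cup \{v\}$ contractible.

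For claim 3 I would analyse the effect of deleting $\mathrm{Int}(A)$ on the block tree: since $A$ corresponds to a leaf of $\mathrm{BT}(H)$ whose unique adjacent single cutset is $S$, removing $\mathrm{Int}(A)$ yields a graph whose block tree is the truncation of $\mathrm{BT}(H)$ obtained by deleting the leaf $A$, and every remaining part keeps its internal $2$-connected structure. Equivalently, I would verify directly that for every $x \in V(H) \setminus \mathrm{Int}(A)$ the graph $(H - \mathrm{Int}(A)) - x$ is connected, using $2$-connectivity of $H$ together with the fact that $S$ separates $\mathrm{Int}(A)$ from $V(H) \setminus A$ in $H$.

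The hardest step will be the gluing argument in claim 2 for the sub-case $G'(A)$ being $3$-connected: one must replace the virtual edge $s_{1} s_{2}$ of $G'(A) - v$ by a genuine $s_{1}$-to-$s_{2}$ path through $H - \mathrm{Int}(A)$ and check that the resulting $H - v$ has no cut vertex, both of which rely on claim 3. Claim 3 itself is less delicate but needs careful bookkeeping of the block-tree truncation, and the argument producing the direct edge from $v$ to $W$ becomes subtle when the three internally disjoint $v$-to-$W$ paths must be rerouted to avoid forbidden vertices of $S$.
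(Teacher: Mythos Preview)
The paper does not prove this lemma; it is quoted from \cite[Lemma~13]{Karpov7} and used as a black box. There is therefore no argument in the present paper against which to compare your attempt.

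That said, one step in your sketch for claim~2 is not correct as written. Your ``crucial auxiliary observation'' that \emph{every} inner vertex $v$ of a pendant part $A$ is adjacent to $W$ does not follow from the three-paths argument you give: from three internally disjoint $v$--$W$ paths in $G$ you can indeed conclude that at least one of them avoids $S=\{s_{1},s_{2}\}$, but such a path may first wander inside $\mathrm{Int}(A)$ before leaving for $W$, so it only certifies that \emph{some} vertex of $\mathrm{Int}(A)$ (namely the last vertex on that path before it enters $W$) is adjacent to $W$, not that $v$ itself is. Fortunately the weaker statement is all your argument actually requires: in the triangle sub-case $\mathrm{Int}(A)$ is a singleton, and in the $3$-connected sub-case you may pick that particular $v$ instead of ``any $v\in\mathrm{Int}(A)$''. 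Once claim~2 is established this way, the stronger version of the auxiliary observation follows a~posteriori from claim~1, since a pendant part is then a cycle.
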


The following lemma is a direct corollary of Lemma~\ref{lemma2}. The original version of Lemma~\ref{lemma3} was proved by Kriesell~\citep[Lemma 3]{Kriesell5}.

\begin{lem} \label{lemma3} Let $G$ be a $3$-connected graph. Let $W \subset V(G)$ be a maximal contractible set such that $G - W$ is not a simple cycle. Let $A_{1}, A_{2}$ be two pendant parts of $G - W$, and let $W_{1} = Int(A_{1})$ and $W_{2} = Int(A_{2})$. Then the following statements hold.

1) $G(W_{1})$ and $G(W_{2})$ are simple paths.

2) $|W_{1}| \geqslant 2, |W_{2}| \geqslant 2$.

3) $W_{1} \cap W_{2} = \varnothing$.

4) All vertices in $W_{1} \cup W_{2}$ have degree 2 in $G - W$.

5) Both $G - W - W_{1}$ and $G - W - W_{2}$ are 2-connected.

6) $N_{G}(W_{1}) \cap W_{2} = \varnothing, N_{G}(W_{2}) \cap W_{1} = \varnothing$.
\end{lem}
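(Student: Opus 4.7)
My plan is to read off each of the six statements directly from Lemma~\ref{lemma2} together with the definitions of \emph{part} and \emph{single cutset}. The only nontrivial ingredient is a short neighbourhood claim about inner vertices of parts; everything else is routine.

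For items 1 and 2: by Lemma~\ref{lemma2}(2) each pendant part $A_i$ is a cycle in $H'$ of length at least $4$, and since $A_i$ is a leaf of $\B(H)$ it contains exactly one single cutset $\{a_i, b_i\}$, so $\R(A_i) = \{a_i, b_i\}$ and $|W_i| = |A_i| - 2 \geq 2$. The only edge of $H'$ possibly absent from $H$ is the chord $a_i b_i$, which does not touch any vertex of $W_i$; deleting $a_i, b_i$ from the cycle $H'(A_i)$ therefore leaves $H(W_i)$, a simple path on at least two vertices. For item 3, if $v \in \I(A_1) \cap \I(A_2)$ with $A_1 \neq A_2$, then by maximality of parts some single cutset $S$ splits $A_1 \cup A_2$; since neither part is split individually, $A_1 \setminus S$ and $A_2 \setminus S$ lie in distinct components of $H - S$, yet $v \notin S$ would lie in both, a contradiction.

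Items 4 and 6 go through the \textbf{neighbourhood claim}: for any part $A$ of $H$ and any $v \in \I(A)$, every $H$-neighbour of $v$ lies in $A$. Indeed, if $u \notin A$ were an $H$-neighbour of $v$, then by maximality of $A$ some single cutset $S$ would split $A \cup \{u\}$; since $A$ itself is not split, we must have $u \notin S$ and $u$ in a component of $H - S$ different from the one containing $A \setminus S$. But $v \notin S$ lies in the component of $A \setminus S$, and the surviving edge $vu$ keeps $v, u$ in the same component of $H - S$, a contradiction. Item 4 now follows because $v \in W_i$ has exactly two neighbours in the cycle $H'(A_i)$, these equal its $H$-neighbours inside $A_i$ (the chord $a_i b_i$ misses $v$), and the claim rules out $H$-neighbours outside $A_i$. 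Item 6 follows because any $G$-neighbour of $W_1$ in $W_2 \subseteq V(H)$ is in fact an $H$-neighbour; by the claim it lies in $A_1 \cap W_2 = A_1 \cap \I(A_2)$, but any such vertex is in no single cutset and therefore also in $\I(A_1)$, contradicting item 3.

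Item 5 is just Lemma~\ref{lemma2}(3) restated via $G - W - W_i = H - \I(A_i)$. The only step requiring genuine thought is the neighbourhood claim; everything else is an unpacking of the definitions, which is why the authors describe this lemma as an obvious corollary.
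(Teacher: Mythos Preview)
Your proof is correct and follows exactly the route the paper intends: the paper gives no proof beyond calling Lemma~\ref{lemma3} ``an obvious corollary of Lemma~\ref{lemma2}'', and you have simply made that corollary explicit by unpacking Lemma~\ref{lemma2}(2),(3) together with the definitions of parts, interiors, and single cutsets. The only step you add beyond the paper---the neighbourhood claim that inner vertices of a part have all their $H$-neighbours inside that part---is the natural (and standard) ingredient needed to pin down items~4 and~6, and your argument for it is sound.
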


\section{Proof of Theorem~\ref{theoremtheorem}}

\begin{lem} \label{firstlemma} Let $G$ be a $3$-connected graph and $k \geqslant 2$ be an integer such that $v(G) \geqslant k + 3$ and $G$ has a $(k - 1)$-contractible set $W$. Assume that there exist four distinct vertices $v_{1}, v_{2}, v_{3}, v_{4} \in V(G - W)$ such that:

1) $v_{1}v_{2} \in E(G)$ and $d_{G - W}(v_{i}) = 2$ for each $i \in \{1, 2, 3, 4\}$,

2) for any $x \in W$ such that $xv_{3}, xv_{4} \in E(G)$, the graph $G - (W \setminus \{x\}) - \{v_{1}, v_{2}\}$ is 2-connected,

3) $|N_{G}(v_{3}) \cap N_{G}(v_{4}) \cap W| > |W \setminus N_{G}(\{v_{1}, v_{2}\})|$.

Then $G$ has a $k$-contractible set.

\end{lem}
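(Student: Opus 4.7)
The plan is to construct the $k$-contractible set in the form $W' := (W \setminus \{x\}) \cup \{v_1, v_2\}$ for a suitable vertex $x$ taken from $A := N_G(v_3) \cap N_G(v_4) \cap W$. The size is automatic, $|W'| = (k-1) - 1 + 2 = k$, and $v_3, v_4$ remain outside $W'$ because the four vertices are distinct and $v_3, v_4 \in V(G) \setminus W$. Hypothesis~(2) gives at once that $G - W' = G - (W \setminus \{x\}) - \{v_1, v_2\}$ is $2$-connected for every $x \in A$, so the entire job comes down to producing some $x \in A$ for which $G(W')$ is connected.

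Let $H := G(W \cup \{v_1, v_2\})$ and $B := W \cap N_G(\{v_1, v_2\})$, so that $G(W') = H - x$. The graph $H$ is connected: $G(W)$ is connected (because $W$ is contractible), $v_1 v_2 \in E(G)$, and each of $v_1, v_2$ has at least one neighbor in $W$ (its degree in $G$ is at least $3$ but only $2$ in $G - W$). In particular $B \neq \varnothing$, and the remaining task is to find $x \in A$ that is not a cut vertex of $H$.

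To do this I would run a depth-first search on $H$ rooted at $v_1$ and begin by visiting $v_2$, which is legal since $v_1 v_2 \in E(H)$; thus $v_1$ and $v_2$ receive DFS numbers $1$ and $2$ respectively. Call $x \in A$ \textit{bad} if $H - x$ is disconnected; since $x \in W$ forces $\mathrm{dfs}(x) \geq 3$, some DFS child $y$ of $x$ then satisfies $\mathrm{low}(y) \geq \mathrm{dfs}(x) \geq 3$. The main technical step, which I expect to be the most delicate part of the argument, is to check that the subtree $T_y$ lies entirely inside $W \setminus B$: neither $v_1$ nor $v_2$ can be a descendant of $x$ in the DFS tree, so $\{v_1, v_2\} \cap T_y = \varnothing$; and any vertex of $B$ inside $T_y$ would be adjacent to $v_1$ or $v_2$ in $H$, yielding a back-edge that forces $\mathrm{low}(y) \leq 2$, a contradiction.

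Setting $f(x) := y$ then produces an injection from the bad vertices of $A$ into $W \setminus B$, because each vertex has a unique DFS parent. Hypothesis~(3) says $|A| > |W \setminus B|$, so some $x \in A$ is not bad; for that $x$ the graph $G(W') = H - x$ is connected and, by hypothesis~(2), $G - W'$ is $2$-connected, exhibiting $W'$ as a $k$-contractible set. Once the DFS/$\mathrm{low}$ bookkeeping certifying $T_y \subseteq W \setminus B$ is in place, the injection and counting close the argument immediately.
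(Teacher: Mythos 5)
Your proposal is correct and follows essentially the same strategy as the paper: both take the candidate set $(W\setminus\{x\})\cup\{v_{1},v_{2}\}$ with $x\in N_{G}(v_{3})\cap N_{G}(v_{4})\cap W$, dispose of $2$-connectivity of the complement via hypothesis~(2), and reduce everything to an injection from the ``bad'' choices of $x$ (those destroying connectivity) into $W\setminus N_{G}(\{v_{1},v_{2}\})$, which hypothesis~(3) then defeats. The only difference is how that injection is realized --- the paper sends each bad $x$ to the second vertex of a shortest path from a forbidden vertex to the non-forbidden part of $W$, whereas you send it to a DFS child $y$ with $\mathrm{low}(y)\geqslant\mathrm{dfs}(x)$ --- and your low-point bookkeeping checks out.
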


\begin{proof} We have $d_{G - W}(v_{i}) = 2$ for any $i \in \{1, 2, 3, 4\}$ and $\delta(G) \geqslant 3$. Therefore, $e_{G}(v_{i}, W) \geqslant 1$ for each $i \in \{1, 2, 3, 4\}$. By condition 3) of Lemma~\ref{firstlemma}, $|N_{G}(v_{3}) \cap N_{G}(v_{4}) \cap W| > 0$. Let $x \in W$ be a common neighbour of $v_{3}$ and $v_{4}$. Note that there are $|N_{G}(v_{3}) \cap N_{G}(v_{4}) \cap W|$ candidates on $x$.

Consider $\{v_{1}, v_{2}\} \cup (W \setminus \{x\})$ (see Figure~\ref{figurezero}). Observe that $|\{v_{1}, v_{2}\} \cup (W \setminus \{x\})| = k$. By condition of Lemma~\ref{firstlemma}, $G - (\{v_{1}, v_{2}\} \cup (W \setminus \{x\}))$ is 2-connected. Therefore the set $\{v_{1}, v_{2}\} \cup (W \setminus \{x\})$ is not contractible only if it is not connected. By assumption, $v_{1}v_{2} \in E(G)$, and hence there is a connected component of the graph $G(W \setminus \{x\})$ such that all its vertices are not adjacent to~$\{v_{1}, v_{2}\}$.

\begin{figure}[ht]
	\centering
		\includegraphics[width=0.31\columnwidth, keepaspectratio]{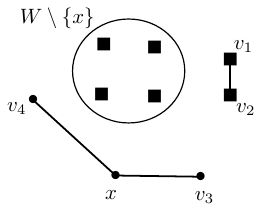}
     \caption{Proof of Lemma~\ref{firstlemma}.}
	\label{figurezero}
\end{figure}

Consequently, we need to take $x$ such that there is no connected component of non-adjacent to $\{v_{1}, v_{2}\}$ vertices in $G(W \setminus \{x\})$ (we call such vertices in $G(W)$ \textit{forbidden}). Observe that there are $|W \setminus N_{G}(\{v_{1}, v_{2}\})|$ forbidden vertices. Recall that there are $|N_{G}(v_{3}) \cap N_{G}(v_{4}) \cap W|$ candidates on~$x$. Then by condition 3) of Lemma~\ref{firstlemma}, there are more candidates on $x$ than forbidden vertices.

For each forbidden vertex, we take a shortest path from this forbidden vertex to the set of vertices in $W$ which are not forbidden (choosing arbitrarily if several such paths exist). For every such path, we take a neighbour of the forbidden endpoint (the second vertex of the path); let $P$ be the union of these vertices. Observe that $|P|$ is at most the number of forbidden vertices. Recall that there are more candidates on $x$ than forbidden vertices. Consequently, there exists $x \notin P$ such that $xv_{3} \in E(G)$ and $xv_{4} \in E(G)$; we fix this $x$. Note that $x$ is suitable. Indeed, assume for the sake of contradiction that there exists a connected component in $G(W \setminus \{x\})$ consisting only of forbidden vertices. Since $G(W)$ is connected, $x$ has at least one neighbour in this component. For this neighbour, $x$ must be the second vertex of the path mentioned above, and hence $x \in P$, a contradiction.
\end{proof}

We prove Theorem \ref{theoremtheorem} by induction on $k$. For convenience, we formulate the induction step as a separate statement.

\begin{lem} \label{mainlemma} Let $k$ be an integer and let $c$ be a non-negative integer such that $k \geqslant 3c + 5$. Let $G$ be a $3$-connected graph such that $G$ has a $(k - 1)$-contractible set, $v(G) \geqslant k + 3$, and $\delta(G) \geqslant k - c$. Then $G$ has a $k$-contractible set.
\end{lem}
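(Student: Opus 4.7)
The plan is to turn the given $(k-1)$-contractible set into a $k$-contractible set either by extending it directly or, if that fails, by applying Lemma~\ref{firstlemma} after analyzing the complementary graph via Lemmas~\ref{lemma2} and~\ref{lemma3}.

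First I would take $W$ to be the given $(k-1)$-contractible set. If there exists $y \in V(G) \setminus W$ with $W \cup \{y\}$ contractible, then $W \cup \{y\}$ is a $k$-contractible set and we are done. So assume $W$ is a maximal contractible set and set $H := G - W$. Since $v(G) \geq k+3$, the graph $H$ has at least $4$ vertices. The case in which $H$ is a simple cycle is handled separately by a direct argument using the fact that every vertex of $H$ has degree $2$ in $H$ and hence at least $k-c-2$ neighbors in $W$. Otherwise, Lemmas~\ref{lemma2} and~\ref{lemma3} furnish two pendant parts $A_1, A_2$ of $H$ whose interiors $W_1, W_2$ each induce a simple path of length at least $2$ consisting of vertices of $H$-degree $2$, and such that $H - W_i$ is $2$-connected for $i = 1, 2$.

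Next I would aim to apply Lemma~\ref{firstlemma} with $v_1, v_2$ a pair of vertices adjacent in the path $G(W_1)$ and $v_3, v_4$ two vertices of $W_2$. Condition~1 is immediate from Lemma~\ref{lemma3}. Condition~3 follows from a degree count: since $\delta(G) \geq k - c$ and each $v_i$ has exactly two neighbors in $H$, every $v_i$ has at least $k - c - 2$ neighbors in $W$, so
\[
|N_G(v_3) \cap N_G(v_4) \cap W| \geq 2(k-c-2) - (k-1) = k - 2c - 3,
\]
\[
|W \setminus N_G(\{v_1, v_2\})| \leq (k-1) - (k-c-2) = c + 1,
\]
and the required strict inequality $k - 2c - 3 > c + 1$ is equivalent to $k \geq 3c + 5$, which is exactly the hypothesis of the lemma.

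The delicate step is Condition~2: for every common neighbor $x \in W$ of $v_3, v_4$, the graph $G - (W \setminus \{x\}) - \{v_1, v_2\}$ has to be $2$-connected. This goes through cleanly when $|W_1| = 2$, because then $\{v_1, v_2\} = W_1$, the graph in question equals $(H - W_1) \cup \{x\}$, and $H - W_1$ is $2$-connected by Lemma~\ref{lemma3}.5 while $x$ inherits at least the two neighbors $v_3, v_4 \in W_2 \subseteq V(H) \setminus W_1$. The symmetric argument handles $|W_2| = 2$. The main obstacle I anticipate is the case $|W_1|, |W_2| \geq 3$: removing two consecutive vertices of the path $W_1$ then leaves degree-one vertices in the residual subgraph that a single vertex $x$ cannot in general repair. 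I expect this case to require a secondary argument, either by using the fact that an inner vertex of $W_1$ has at least $k - c - 2$ neighbors in $W$ to swap it with a suitable element of $W$ and thereby obtain a new maximal $(k-1)$-contractible set with a shorter pendant interior, or by refining the choice of $v_3, v_4$ so that every common neighbor $x$ is forced to be adjacent to the broken endpoints of $W_1 \setminus \{v_1, v_2\}$, restoring $2$-connectivity.
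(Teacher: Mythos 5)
You have set the proof up correctly and you have put your finger on exactly the right difficulty, but the proposal does not close it: when the interior path $W_1$ you draw $v_1,v_2$ from has at least three vertices, you only offer two speculative fixes, and neither is carried out. Fix (a) (swapping an inner vertex of $W_1$ with a vertex of $W$ to shorten the pendant interior) is unjustified: nothing guarantees the swapped set is still contractible, nor that the new complementary graph has a shorter pendant interior. Fix (b) is the right instinct, but the step you are missing is that $v_3$ should not be taken from the other part at all: it should be the broken endpoint itself. Concretely, writing the pendant part $A$ with $|\mathrm{Int}(A)|\geqslant 3$ as a path $rw_1w_2\cdots w_ls$, the paper takes $v_1=w_1$, $v_2=w_2$, $v_3=w_3$ and $v_4=u$ an interior vertex of a second pendant part. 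Then every common neighbour $x\in W$ of $v_3,v_4$ is automatically adjacent to the vertex $w_3$ whose degree would otherwise drop to one, and $G-(\{w_1,w_2\}\cup(W\setminus\{x\}))$ is the $2$-connected graph $G-W-\mathrm{Int}(A)$ of Lemma~\ref{lemma2}.3 with the ear $sw_l\cdots w_3xu$ attached, hence $2$-connected. Conditions 1 and 3 of Lemma~\ref{firstlemma} survive this change of $v_3$ because $w_3$ also has degree $2$ in $G-W$, so the same count $k-2c-3>c+1$ applies.

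The same ``flank the deleted pair'' device is what you need in the simple-cycle case, which you leave as ``a direct argument'': there one takes $v_3=r_i$ and $v_4=r_{i+3}$ around $v_1=r_{i+1}$, $v_2=r_{i+2}$, so that a common neighbour $x$ closes a Hamiltonian cycle of the remaining graph. With these two choices in place, the only configuration left is the one you do handle correctly ($|W_1|=|W_2|=2$, where deleting $v_1,v_2$ deletes a whole interior), and there your degree count suffices under $k\geqslant 3c+5$; the paper's extra subcase analysis at that point only serves to weaken the hypothesis locally to $k\geqslant 2c+4$.
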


\begin{proof}

Let $W$ be a $(k - 1)$-contractible set in $G$. Assume for the sake of contradiction that $W$ is maximal.

\vspace{3.5mm}

\noindent \textbf{Case 1.} $G - W$ is a simple cycle.
\vspace{3.5mm}

We enumerate the vertices of $G - W$ in a cyclic order: $r_{1}, r_{2}, ...$, $r_{m}$, where $m \geqslant 4$. Our purpose is to apply Lemma~\ref{firstlemma}, setting $v_{1} = r_{i + 1}$, $v_{2} = r_{i + 2}$, $v_{3} = r_{i}$, $v_{4} = r_{i + 3}$ for arbitrary $i$.

Observe that condition 1) of Lemma~\ref{firstlemma} is satisfied.

We now verify condition 2) of Lemma~\ref{firstlemma}. Assume that $r_{i}$ and $r_{i + 3}$ have a common neighbour $x$ in $W$ (see Figure~\ref{figureone}). Then $G - \{r_{i + 1}, r_{i + 2}\} - (W \setminus \{x\})$ is 2-connected because this graph has a Hamiltonian cycle $r_{i}xr_{i + 3}r_{i + 4}....r_{i - 1}$.

\begin{figure}[ht]
	\centering
		\includegraphics[width=0.35\columnwidth, keepaspectratio]{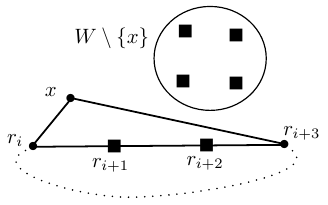}
     \caption{$G - W$ is a simple cycle.}
	\label{figureone}
\end{figure}

It remains to verify condition 3) of Lemma~\ref{firstlemma}. Since $G - W$ is a simple cycle and $\delta(G) \geqslant k - c$, we have $e_{G}(r_{i}, W) \geqslant k - c - 2$, $e_{G}(r_{i + 1}, W) \geqslant k - c - 2$, and $e_{G}(r_{i + 3}, W) \geqslant k - c - 2$. Since $|W| = k - 1$ and $2(k - c - 2) - (k - 1) > c + 1$, we have $|N_{G}(r_{i}) \cap N_{G}(r_{i + 3}) \cap W| \geqslant c + 2$. Taking into account $e_{G}(r_{i + 1}, W) \geqslant k - c - 2$, this yields $|W \setminus N_{G}(\{r_{i + 1}, r_{i + 2}\})| \leqslant c + 1$. Consequently, $|N_{G}(r_{i}) \cap N_{G}(r_{i + 3}) \cap W| > |W \setminus N_{G}(\{r_{i + 1}, r_{i + 2}\})|$. Hence, condition 3) of Lemma~\ref{firstlemma} is satisfied.

Thus Lemma~\ref{firstlemma} can be applied, and $G$ has a $k$-contractible set.

\vspace{3.5mm}

\noindent \textbf{Case 2.} $G - W$ is not a simple cycle.

\vspace{3.5mm}

By Lemma~\ref{lemma2}, $G - W$ has at least two pendant parts, and each of them is a cycle of length at least 4. Therefore both parts have at least two internal vertices. 

\vspace{3.5mm}

\noindent \textbf{Case 2.1.} There exists a pendant part $A$ of $G - W$ such that $|A| \geqslant 5$ (in particular, $|Int(A)| \geqslant 3$).

\vspace{3.5mm}

Let $Bound(A) = \{r, s\}$ and $Int(A) = \{w_{1}, ..., w_{l}\}$, where the vertices are enumerated so that $rw_{1}...w_{l}s$ is a path. Let $u$ be an internal vertex of the pendant part different from $A$. By Lemma~\ref{lemma3}.4, $d_{G - W}(w_{i}) = 2$ for each $i \in \{1, 2, 3\}$, and $d_{G - W}(u) = 2$.

Our goal is to apply Lemma~\ref{firstlemma}, where $v_{1} = w_{1}$, $v_{2} = w_{2}$, $v_{3} = w_{3}$, $v_{4} = u$. Observe that condition 1) of Lemma~\ref{firstlemma} holds.

We now verify condition 2) of Lemma~\ref{firstlemma}. Assume that $w_{3}$ and $u$ have a common neighbour $x \in W$ (see Figure~\ref{figuretwo}). Then $G - (\{w_{1}, w_{2}\} \cup (W \setminus \{x\}))$ is 2-connected because $G - W - Int(A)$ is 2-connected (by Lemma~\ref{lemma2}.3) and there is a path $sw_{l}....w_{3}xu$.

\begin{figure}[ht]
	\centering
		\includegraphics[width=0.4\columnwidth, keepaspectratio]{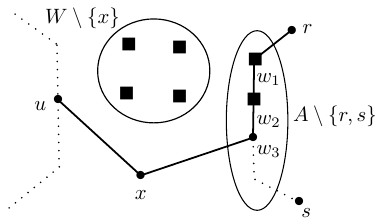}
     \caption{$G - W$ is not a simple cycle, and there exists a pendant part $A$ of $G - W$ such that $|A| \geqslant 5$.}
	\label{figuretwo}
\end{figure}

It remains to verify condition 3) of Lemma~\ref{firstlemma}. Recall that $\delta(G) \geqslant k - c$, $d_{G - W}(w_{i}) = 2$ for any $i \in \{1, 2, 3\}$, and $d_{G - W}(u) = 2$. Therefore, $e_{G}(w_{i}, W) \geqslant k - c - 2$ for each $i \in \{1, 2, 3\}$, and $e_{G}(u, W) \geqslant k - c - 2$. Then it follows from $|W| = k - 1$ and $2(k - c - 2) - (k - 1) > c + 1$ that $|N_{G}(w_{3}) \cap N_{G}(u) \cap W| \geqslant c + 2$. Since $e_{G}(w_{1}, W) \geqslant k - c - 2$, we have $|W \setminus N_{G}(\{w_{1}, w_{2}\})| \leqslant c + 1$. Consequently, $|N_{G}(w_{3}) \cap N_{G}(u) \cap W| > |W \setminus N_{G}(\{w_{1}, w_{2}\})|$. Hence, condition 3) of Lemma~\ref{firstlemma} holds.

Therefore Lemma~\ref{firstlemma} can be applied, and $G$ has a $k$-contractible set.

\vspace{3.5mm}

\noindent \textbf{Case 2.2.} Each pendant part of $G - W$ consists of four vertices.

\vspace{3.5mm}

\begin{rem} In this case, we need $k \geqslant 2c + 4$ only, instead of $k \geqslant 3c + 5$.
\end{rem}

Let $N_{v} = N_{G}(v) \cap W$ for any vertex $v \in G - W$.

Let $A$ and $B$ be two pendant parts of $G - W$. Let $\{u_{1}, u_{2}\} = Int(A)$ and $\{w_{1}, w_{2}\} = Int(B)$. By Lemma~\ref{lemma3}.4, $d_{G - W}(u_{1}) = 2$, $d_{G - W}(u_{2}) = 2$, $d_{G - W}(w_{1}) = 2$, and $d_{G - W}(w_{2}) = 2$. Since $\delta(G) \geqslant k - c$, we have $e_{G}(u_{1}, W) \geqslant k - c - 2$, $e_{G}(u_{2}, W) \geqslant k - c - 2$, $e_{G}(w_{1}, W) \geqslant k - c - 2$, and $e_{G}(w_{2}, W) \geqslant k - c - 2$.

\vspace{3.5mm}

\noindent \textbf{Case 2.2.1.} $|N_{w_{1}} \cap N_{w_{2}}| > |W \setminus (N_{u_{1}} \cup N_{u_{2}})|$ or $|N_{u_{1}} \cap N_{u_{2}}| > |W \setminus (N_{w_{1}} \cup N_{w_{2}})|$.

\vspace{3.5mm}

Without loss of generality, $|N_{w_{1}} \cap N_{w_{2}}| > |W \setminus (N_{u_{1}} \cup N_{u_{2}})|$. Our purpose is to apply Lemma~\ref{firstlemma}, setting $v_{1} = u_{1}$, $v_{2} = u_{2}$, $v_{3} = w_{1}$, and $v_{4} = w_{2}$. Observe that condition 3) of Lemma~\ref{firstlemma} is satisfied. Recall that $d_{G - W}(u_{1}) = 2$, $d_{G - W}(u_{2}) = 2$, $d_{G - W}(w_{1}) = 2$, and $d_{G - W}(w_{2}) = 2$. Hence, condition 1) of Lemma~\ref{firstlemma} holds.

It remains to verify condition 2) of Lemma~\ref{firstlemma}. Note that if $w_{1}$ and $w_{2}$ have a common neighbour $x$ (see Figure~\ref{figurethree}) then $G - \{u_{1}, u_{2}\} - (W \setminus \{x\})$ is 2-connected. Indeed, by Lemma~\ref{lemma2}.3, $G - \{u_{1}, u_{2}\} - W$ is 2-connected and $xw_{1}, xw_{2} \in E(G)$.

\begin{figure}[ht]
	\centering
		\includegraphics[width=0.33\columnwidth, keepaspectratio]{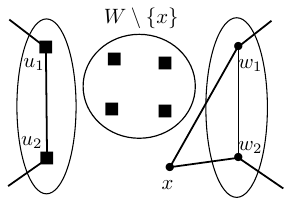}
     \caption{$G - W$ is not a simple cycle, and each pendant part of $G - W$ consists of four vertices.}
	\label{figurethree}
\end{figure}

Consequently, Lemma~\ref{firstlemma} can be applied, and $G$ has a $k$-contractible set.

\vspace{3.5mm}

\noindent \textbf{Case 2.2.2.} $|N_{w_{1}} \cap N_{w_{2}}| \leqslant |W \setminus (N_{u_{1}} \cup N_{u_{2}})|$ and $|N_{u_{1}} \cap N_{u_{2}}| \leqslant |W \setminus (N_{w_{1}} \cup N_{w_{2}})|$.

\vspace{3.5mm}

Denote $$f_{1} = |N_{u_{1}} \cap N_{u_{2}}|, f_{2} = |N_{u_{1}} \setminus N_{u_{2}}|, f_{3} = |N_{u_{2}} \setminus N_{u_{1}}|,$$ $$e_{1} = |N_{w_{1}} \cap N_{w_{2}}|, e_{2} = |N_{w_{1}} \setminus N_{w_{2}}|, e_{3} = |N_{w_{2}} \setminus N_{w_{1}}|.$$ 

It follows from the observation before case 2.2.1 that $|N_{v}| \geqslant k - c - 2$ for each $v \in \{u_{1}, u_{2}, w_{1}, w_{2}\}$. Therefore, 

$$f_{1} + f_{2} \geqslant k - c - 2, f_{1} + f_{3} \geqslant k - c - 2, e_{1} + e_{2} \geqslant k - c - 2, e_{1} + e_{3} \geqslant k - c - 2.$$

Combining all these inequalities, we obtain

$$2f_{1} + f_{2} + f_{3} + 2e_{1} + e_{2} + e_{3} \geqslant 4(k - c - 2).$$

By the condition of case 2.2.2, $|N_{w_{1}} \cap N_{w_{2}}| \leqslant |W \setminus (N_{u_{1}} \cup N_{u_{2}})|$. Therefore, $e_{1} \leqslant |W \setminus (N_{u_{1}} \cup N_{u_{2}})|$. By definition of $N_{v}$, we have $N_{u_{1}} \cup N_{u_{2}} \subseteq W$ for any $v \in V(G - W)$. Since $|W| = k - 1$, we have $e_{1} \leqslant k - 1 - |N_{u_{1}} \cup N_{u_{2}}|$. Hence, $e_{1} \leqslant k - 1 - f_{1} - f_{2} - f_{3}$. Therefore, we derive from $|N_{w_{1}} \cap N_{w_{2}}| \leqslant |W \setminus (N_{u_{1}} \cup N_{u_{2}})|$ that $e_{1} + f_{1} + f_{2} + f_{3} \leqslant k - 1$. Similarly, it follows from $|N_{u_{1}} \cap N_{u_{2}}| \leqslant |W \setminus (N_{w_{1}} \cup N_{w_{2}})|$ that $f_{1} + e_{1} + e_{2} + e_{3} \leqslant k - 1$. Combining these inequalities, we obtain

$$2f_{1} + 2e_{1} + e_{2} + e_{3} + f_{2} + f_{3} \leqslant 2k - 2.$$

Hence, $2k - 2 \geqslant 4(k - c - 2) \Rightarrow 2c + 3 \geqslant k$, a contradiction to $k \geqslant 2c + 4$. \end{proof}

\vspace{3.5mm}

\begin{proof}[\textbf{Proof of Theorem~\ref{theoremtheorem}:}]

\vspace{3.5mm}

We prove Theorem~\ref{theoremtheorem} by induction. The case $k = 4$ serves as the base case of the induction. We do not require any restriction on $\delta(G)$ in the base case. By Theorem~\ref{kriessell'sresult}, any $3$-connected graph on at least 8 vertices contains a $4$-contractible set. Hence, there is no problem with the restriction on the number of vertices in the induction step from $k = 4$ to $k = 5$.

\vspace{3.5mm}

\textbf{Induction step.}

\vspace{3.5mm}

By the induction hypothesis, $G$ has a $(k - 1)$-contractible set. Let $c = \left[ \frac{k - 5}{3} \right]$. Then $k \geqslant 3c + 5$ and $\delta(G) \geqslant k - c$. By Lemma~\ref{mainlemma}, $G$ has a $k$-contractible set.
\end{proof}

\end{document}